\theoremstyle{plain}
\newtheorem{THEOREM}{Theorem}[section]
\newtheorem{LEMMA}[THEOREM]{Lemma}
\newtheorem{lemma}[THEOREM]{Lemma}
\theoremstyle{definition}
\theoremstyle{remark}
\newtheorem{REMARK}[THEOREM]{Remark}
\def \a {\alpha}
\def \d {\delta}
\def \e {\varepsilon}
\def \l {\lambda}
\def \t {\tau}
\def \cH {\mathcal{H}}
\def \cL {\mathcal{L}}
\newcommand{\Z}{\ensuremath{\mathbb{Z}}}   
\newcommand{\R}{\ensuremath{\mathbb{R}}}   
\newcommand{\T}{\ensuremath{\mathbb{T}}}   
\def \p {\partial}
\def \ss {\subset}
\renewcommand{\geq}{\geqslant}
\renewcommand{\ge}{\geqslant}
\renewcommand{\leq}{\leqslant}
\renewcommand{\le}{\leqslant}
\DeclareMathOperator{\supp}{supp} %
\DeclareMathOperator{\Lip}{Lip} %
\def \dx  {\, \mbox{d}x}
\def \dt  {\, \mbox{d}t}
\def \dl  {\, \mbox{d}l}
\def \dy  {\, \mbox{d}y}
\def \dz  {\, \mbox{d}z}
\def \dr  {\, \mbox{d}r}
\def \ds  {\, \mbox{d}s}
\def \dtau  {\, \mbox{d}\tau}
\def \dd  {\mbox{d}}
\def \brho {\bar{\rho}}
\begin{document}

\title{On the Structure of Limiting Flocks in Hydrodynamic Euler Alignment Models}

\author{Trevor M. Leslie}
\address{Department of Mathematics, University of Wisconsin, Madison}
\email{tleslie2@wisc.edu}

\author{Roman Shvydkoy}
\address{Department of Mathematics, Statistics, and Computer Science, 
	University of Illinois, Chicago}
\email{shvydkoy@uic.edu}

\date{\today}

\subjclass{92D25, 35Q35, 76N10}

\keywords{flocking, alignment, collective behavior, emergent dynamics, fractional dissipation, Cucker-Smale}

\thanks{\textbf{Acknowledgment.} RS was supported in part by NSF grants DMS-1515705, DMS-1813351, and the Simons Foundation.  TL was supported in part by NSF grants DMS-1147523 (PI: Andreas Seeger) and DMS 1515705 (PI: Roman Shvydkoy).}

\begin{abstract}
The goal of this note is to study limiting behavior of a self-organized continuous flock evolving according to the 1D hydrodynamic Euler Alignment model. We provide a series of quantitative estimates that show how far the density of the limiting flock is from a uniform distribution. The key quantity that controls density distortion is the entropy $\cH = \int \rho \log \rho \dx$, and the measure of deviation from uniformity is given by a well-known conserved quantity $e = u' + \cL_\psi \rho$, where $u$ is velocity and $\cL_\psi $ is the communication operator with kernel $\psi$. The cases of Lipschitz, singular geometric, and topological kernels are covered in the study. 
\end{abstract}


\maketitle

\section{Introduction}

In this note we continue the study of the long-time behavior of solutions to the following Euler-alignment model on the torus $\T = [-\pi, \pi]$:
\begin{equation}
\label{e:maind}
\rho_t + (\rho u)' = 0,
\end{equation}
\begin{equation}
\label{e:mainv}
u_t + u u' = \int_{\T} \psi(x,y)(u(y)-u(x))\rho(y)\dy = \cL_\psi(\rho u) - u\cL_\psi(\rho),
\end{equation}
where 
\[
\cL_\psi f := \int_\T \psi(x,y)(f(y) - f(x))\dy,
\]
and $\psi:\T^2 \to [0,\infty)$ is a communication  kernel. (Here and below, we use primes to denote spatial derivatives: $f' = \p_x f$.)  This model represents a one-dimensional hydrodynamic analogue of the Cucker-Smale agent based dynamical system \cite{CS2007a,CS2007b}, and found application in a wide variety of subjects, see \cite{MT2014,VZ2012,CCP2017} for recent surveys. The system \eqref{e:maind} -- \eqref{e:mainv} is designed to describe the mechanism of alignment of congregations of agents governed by laws of self-organization with communication encoded into the kernel $\psi$. The  long time behavior is thus characterized by convergence to a flocking state, by which we mean alignment to a constant velocity $u \to \bar{u}$, and stabilization of density to a traveling wave 
\begin{equation}\label{e:flockwave}
\rho(x,t)\to \rho_\infty(x - t\bar{u}).
\end{equation}
Such a result was proved under the strong global communication condition $\inf \psi >0$ on the torus, see \cite{ST2,ST3}; on the open space, exponential alignment and bounded support of density (weak flocking) was shown under a weaker ``fat tail" condition $\int^\infty \psi(r) \dr =\infty$ in a variety of settings, \cite{CCTT2016,TT2014,HT2008}. The case of local kernels, by which we mean purely local protocols, $\supp \psi \ss \{|x-y|\leq R\}$, remains largely open with the exception of a new class of topological kernels introduced in \cite{ST-topo}, and the case of strong communication relative to other initial parameters of the data, \cite{MPT2018}.

The multitude of flocking states \eqref{e:flockwave} demonstrates that the Euler alignment system \eqref{e:maind} -- \eqref{e:mainv} supports a variety of self-organization outcomes. However it is hard to predict what that outcome $\rho_\infty$ will be from initial conditions. Note that $\bar{u}$, on the other hand, is uniquely determined by the ratio of conserved momentum over mass. In this article we propose to study this question with a less ambitious goal: determine how far the limiting flock  $\rho_\infty$ deviates from the uniform distribution $\bar{\rho} = \frac{1}{2 \pi} M_0$, where $M_0$ is the total mass. In fact, we consider a more general case when the convergence \eqref{e:flockwave} is unknown. As a measure of ``disorder" of the flock we consider the long time limit
\begin{equation}\label{e:disorder}
\limsup_{t \to \infty} \| \rho(\cdot,t) - \bar{\rho}\|_{L^1(\T)}.
\end{equation}
Let us recall that in 1D the Euler alignment system possesses an extra conserved quantity (see \cite{TT2014,CCTT2016,ST-topo})
\[
e_t + (u e)' = 0, \quad e = u' + \cL_\psi \rho,
\]
provided $\psi$ is either of convolution type, $\psi(x,y) = \psi(x-y)$, or topological type as defined below. The physical nature of this quantity has remained elusive, but we will find that it is directly implicated in quantifying disorder of the flock similar to the topological entropy. More precisely, we consider the $e$-quantity per mass of the flock:  $q = \frac{e}{\rho}$. Note that $q$ is transported:
\begin{equation}\label{e:transport}
	q_t + u q' = 0.
\end{equation} 
This allows to trace information at any time $t$ back to the initial datum, in particular, $\|q(\cdot,t)\|_\infty = \|q_0\|_\infty$. The thrust of our main results is to show that the latter is the parameter that controls deviation from the uniform flock expressed by the limit \eqref{e:disorder}. 

Let us set some assumptions.  We distinguish two classes of kernels:
\begin{itemize}
\item Lipschitz  convolution type kernels $\psi \in \Lip(\T)$ with local communication 
\begin{equation}\label{e:Lipker}
	 \psi(x-y)\ge \l \chi_{R_0}(|x-y|), \quad R_0, \l>0;
\end{equation}
\item Symmetric topological kernels, as introduced in \cite{ST-topo}\footnote{We cite here the first draft of the manuscript \cite{ST-topo}; in later versions the authors specialize to the case where $\tau=n$, where $n$ is the dimension of the space.}: here $\t\geq 0$, $0<\a<2$,
\begin{equation}\label{e:topoker}
	\psi(x,y,t) = \frac{h(x-y)}{|x-y|^{1+\a-\tau} \dd(x,y,t)^\tau}, 
	\quad 
	h(x-y) \geq \l \chi_{R_0}(|x-y|),
\end{equation}
where 
\[ 
\dd(x,y,t) = \left| \int_x^y \rho(z,t)\dz \right|. 
\]	
\end{itemize}
The well-posedness theory for the smooth case was developed in \cite{CCTT2016,TT2014}, and for the singular case in \cite{DKRT2018,ST1,ST2,ST3} (geometric kernels, $\tau=0$) and \cite{ST-topo} (topological kernels, $\t>0$). In the smooth kernel setting, the threshold condition $u_0' + \psi * \rho_0\ge 0$ guarantees global existence,
while in the singular case, one has existence for any data if $\tau\leq \a$ and for small data if $\tau >\a$.

We now state our main results. 
\begin{THEOREM}
\label{t:exp}
Let $(\rho, u)$ be a smooth solution to the system \eqref{e:maind}--\eqref{e:mainv}, with kernel given by either \eqref{e:Lipker} or \eqref{e:topoker}.  If  $e_0 = 0$, then 
\begin{equation}
\label{e:exprho}
\|\rho(t) - \brho\|_{L^1} \le c_1(\|\rho_0\|_{L^2}) e^{-\l c_2(R_0,M_0,\a,\t, \|\rho_0\|_{L^\infty}) t},
\end{equation}
where $c_2$ depends only on $R_0$ and $M_0$ in the Lipschitz case, and $c_2$ need not depend on $\|\rho_0\|_{L^\infty}$ if $\t\le \a + 1$ in the topological case.
\end{THEOREM}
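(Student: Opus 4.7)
The hypothesis $e_0 = 0$ is pivotal because $q = e/\rho$ satisfies the transport equation \eqref{e:transport}, so $q_0 \equiv 0$ propagates to $q(\cdot,t) \equiv 0$ and hence $e(t) \equiv 0$ for all time. This enslaves the velocity to the density through the pointwise identity $u'(x,t) = -\cL_\psi \rho(x,t)$, reducing the problem to tracking the density alone. The natural Lyapunov functional is the relative entropy $\cH(\rho|\brho) = \int \rho \log(\rho/\brho) \dx$, and I plan to differentiate it using \eqref{e:maind} and integrate by parts:
\begin{equation*}
\frac{d}{dt}\cH(\rho|\brho) = -\int \rho u' \dx = \int \rho \, \cL_\psi \rho \dx = -\frac{1}{2} \int_\T\!\!\int_\T \psi(x,y)(\rho(y) - \rho(x))^2 \dy \dx,
\end{equation*}
where the last step uses the symmetry of $\psi$ in both the Lipschitz convolution case and the topological case (since $\dd(x,y,t)$ is manifestly symmetric).

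Next I would establish a Poincar\'e-type coercivity estimate of the form
\begin{equation*}
\int_\T \!\! \int_\T \psi(x,y)(\rho(y)-\rho(x))^2 \dy \dx \;\geq\; \l \, c(R_0, M_0, \a, \t, \|\rho\|_\infty) \, \|\rho - \brho\|_{L^2}^2.
\end{equation*}
In the Lipschitz case \eqref{e:Lipker} this reduces to the classical fact that $\iint_{|x-y|\le R_0}(\rho(x)-\rho(y))^2 \dy \dx$ controls the $L^2$-deviation from the mean on the torus (a spectral-gap argument on Fourier side). For the topological kernel \eqref{e:topoker}, when $\t \leq \a+1$ the exponent $|x-y|^{1+\a-\t}$ is non-negative so one bounds it uniformly by $(2\pi)^{1+\a-\t}$, and since $\dd \leq M_0$ the kernel is pointwise bounded below by an expression depending only on $M_0, \a, \t, R_0$; when $\t > \a+1$ one instead uses $\dd \le \|\rho\|_\infty|x-y|$ to absorb the supercritical singularity and arrives at a fractional Gagliardo seminorm $[\rho]_{H^{\a/2}}^2$, which dominates $\|\rho - \brho\|_{L^2}^2$ by Poincar\'e on the torus. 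This is where the $\|\rho_0\|_{L^\infty}$ dependence of $c_2$ enters precisely in the indicated regime; I would cite the well-posedness results \cite{ST1,ST2,ST3,ST-topo} to ensure $\|\rho(t)\|_{L^\infty}$ remains controlled by $\|\rho_0\|_{L^\infty}$ uniformly in time.

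To close the loop I need the reverse comparison $\cH(\rho|\brho) \leq C(\|\rho\|_{L^\infty}, \brho) \|\rho - \brho\|_{L^2}^2$, which follows from the elementary inequality $s\log s - s + 1 \leq C_K (s-1)^2$ on bounded intervals $s \in [0,K]$ applied with $s = \rho/\brho$. Combining this with the two previous displays yields the differential inequality $\frac{d}{dt}\cH \leq -\l c \, \cH$, integrating to $\cH(\rho(t)|\brho) \leq e^{-\l c t} \cH(\rho_0|\brho)$. The initial entropy is in turn bounded by $C(\|\rho_0\|_{L^2})$ using $s\log s \leq s^2$ for $s \geq 1$. Finally, the Csisz\'ar--Kullback--Pinsker inequality $\|\rho - \brho\|_{L^1}^2 \leq 2 \brho^{-1}\cH(\rho|\brho)$ converts the entropy decay into the $L^1$ decay \eqref{e:exprho}, with the factor $\tfrac12$ on the exponent absorbed into $c_2$.

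The main obstacle is the coercivity step in the topological case: getting a lower bound on $\iint \psi(\rho(y)-\rho(x))^2$ that is \emph{both} independent of $\|\rho\|_\infty$ when $\t \le \a +1$ \emph{and} sharp enough to yield a uniform spectral gap. The borderline behavior of $|x-y|^{1+\a-\t}$ as $\t \to \a+1$ and the density-dependent weight $\dd(x,y,t)^\t$ must be handled carefully; a naive pointwise minorization of $\psi$ loses too much, so one must exploit the full fractional-Sobolev strength of the singular integral in the supercritical regime.
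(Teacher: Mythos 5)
Your architecture---entropy dissipation $\dot{\cH} = -\tfrac12 \iint \psi|\rho(x)-\rho(y)|^2$ under $e\equiv 0$, a near-diagonal Poincar\'e estimate for coercivity, and Csisz\'ar--Kullback to pass back to $L^1$---matches the paper's proof exactly, and the computation $\dot{\cH} = -\int\rho u' = \int\rho\,\cL_\psi\rho$ together with the symmetrization is correct. However, one step as written would weaken the conclusion: you assert the reverse comparison $\cH \leq C_K\|\rho-\brho\|_{L^2}^2$ only on bounded intervals $s=\rho/\brho\in[0,K]$, which would inject a $\|\rho_0\|_{L^\infty}$-dependence into $c_2$ in \emph{every} case, contradicting the theorem's claim that $c_2$ is $\|\rho_0\|_{L^\infty}$-free for Lipschitz kernels and for topological kernels with $\t\le \a+1$. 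In fact $s\log s - s + 1 \le (s-1)^2$ holds for all $s\ge 0$ with constant $1$ (equivalently $\log x\le x-1$), yielding the clean global bound $\cH\le \brho^{-1}\|\rho-\brho\|_{L^2}^2$ with no amplitude dependence. Dropping the boundedness caveat closes this gap.

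Two smaller remarks. First, your concern that ``a naive pointwise minorization of $\psi$ loses too much'' in the supercritical regime $\t>\a+1$, necessitating a fractional Gagliardo seminorm, is misplaced: since $\dd^\t \le M_0^\eta(\|\rho\|_\infty|x-y|)^{\t-\eta}$, taking $\eta=1+\a$ cancels the negative power of $|x-y|$ and gives the pointwise lower bound $\underline{\psi}(t)\ge \l M_0^{-1-\a}\|\rho(t)\|_\infty^{-(\t-1-\a)}$ on $\{|x-y|<R_0\}$, after which the same flat Poincar\'e lemma used in the Lipschitz case applies verbatim; this is precisely how the paper proceeds, and it is where (and only where) the $\|\rho_0\|_{L^\infty}$-dependence of $c_2$ enters. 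Your fractional-Sobolev route is also valid but uses more machinery for the same payoff. Second, rather than citing well-posedness results for the amplitude bound, invoke the maximum principle directly: under $e\equiv 0$ one has $\rho_t + u\rho' = \rho\cL_\psi\rho$, and at a spatial maximum $x_+$ the term $\cL_\psi\rho(x_+)\le 0$, so $\tfrac{\dd}{\dt}\rho_+(t)\le 0$ and $\|\rho(t)\|_{L^\infty}\le\|\rho_0\|_{L^\infty}$ for all $t$. This is both sharper and simpler than a generic appeal to the existence theory.
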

We note that this result complements the one obtained in \cite{ST-topo} for topological case. Namely, if $e_0 = 0$, then in the $L^\infty$ metric one has a slower algebraic relaxation towards the uniform state:
\[
\|\rho(t) - \brho\|_{L^\infty} \lesssim \frac{1}{\sqrt{1+t}},
\]
as $t \to \infty$. 
\begin{THEOREM}
	\label{t:Lip}
	Let $(\rho, u)$ be a smooth solution to the system \eqref{e:maind}--\eqref{e:mainv}, with a Lipschitz kernel $\psi$ satisfying \eqref{e:Lipker}. Provided $\|q_0\|_{L^\infty} < \|\psi\|_{L^1}$, one has
	\begin{equation}
	\label{e:Lip}
	\limsup_{t \to \infty} \| \rho(\cdot,t) - \bar{\rho}\|_{L^1}  \leq \frac{ M_0  \|q_0\|_{L^\infty} \|\psi\|_{L^\infty}}{ \l  c(R_0) (\|\psi\|_{L^1} - \|q_0\|_{L^\infty}) },
	\end{equation}
	where $c(R_0)$ is a constant depending only on $R_0$. 
\end{THEOREM}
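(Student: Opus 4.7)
The plan is to reduce \eqref{e:Lip} to two ingredients: an asymptotic pointwise upper bound on $\rho$ coming from a maximum principle, and a relative-entropy--dissipation estimate that is closed by a Bernoulli-type ODE.

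For the first ingredient, I would use the identity $u' = q\rho - \cL_\psi \rho$ to rewrite \eqref{e:maind} along characteristics as
\[
	D_t \rho = \rho \bigl[ \psi * \rho - (\|\psi\|_{L^1} + q) \rho \bigr], \qquad D_t := \p_t + u \p_x.
\]
Since $\psi * \rho \leq \|\psi\|_{L^\infty} M_0$ pointwise and $\|\psi\|_{L^1} + q \geq \|\psi\|_{L^1} - \|q_0\|_{L^\infty} > 0$ by the standing hypothesis, the bracket is $\leq 0$ whenever $\rho \geq R_* := \|\psi\|_{L^\infty} M_0 / (\|\psi\|_{L^1} - \|q_0\|_{L^\infty})$. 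A standard maximum-principle argument along the Lagrangian flow then yields $\limsup_{t\to\infty}\|\rho(t)\|_{L^\infty} \leq R_*$; in particular, for every $\e > 0$, $\|\rho(t)\|_{L^\infty} \leq R_* + \e$ once $t$ exceeds some threshold $t_\e$.

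Next I would track the relative entropy $\cH_{\mathrm{rel}}(t) := \int_\T \rho \ln(\rho / \brho) \dx \geq 0$. A direct computation using $\rho_t = -(\rho u)'$, the identity $u' = q\rho - \cL_\psi \rho$, and the mean-zero property $\int_\T q\rho \dx = \int_\T e \dx \equiv 0$ produces the balance
\[
	\frac{d \cH_{\mathrm{rel}}}{dt} = -\int_\T q\rho (\rho - \brho) \dx - \tfrac{1}{2} \iint_{\T^2} \psi(x-y) (\rho(x) - \rho(y))^2 \dx \dy.
\]
The Dirichlet form on the right is coercive: $\psi \geq \l \chi_{R_0}$, together with the Poincar\'e inequality for the indicator kernel on $\T$ and the elementary bound $\cH_{\mathrm{rel}} \leq \|\rho - \brho\|_{L^2}^2 / \brho$ (from $\ln x \leq x - 1$), give
\[
	\tfrac{1}{2} \iint_{\T^2} \psi(x-y) (\rho(x) - \rho(y))^2 \dx \dy \geq \l\, c(R_0) \|\rho - \brho\|_{L^2}^2 \geq \l\, c(R_0)\, \brho\, \cH_{\mathrm{rel}}.
\]
The source term is bounded using Step 1 and the Csisz\'ar--Kullback--Pinsker inequality $\|\rho - \brho\|_{L^1} \leq \sqrt{2 M_0\, \cH_{\mathrm{rel}}}$:
\[
	\Bigl| \int_\T q\rho (\rho - \brho) \dx \Bigr| \leq \|q_0\|_{L^\infty} \|\rho\|_{L^\infty} \|\rho - \brho\|_{L^1} \leq \|q_0\|_{L^\infty} (R_* + \e) \sqrt{2 M_0\, \cH_{\mathrm{rel}}},
\]
valid for $t \geq t_\e$.

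Combining these estimates gives the Bernoulli-type differential inequality
\[
	\frac{d \cH_{\mathrm{rel}}}{dt} \leq C_1 \sqrt{\cH_{\mathrm{rel}}} - C_2\, \cH_{\mathrm{rel}}, \qquad C_1 := \sqrt{2 M_0}\, \|q_0\|_{L^\infty} (R_* + \e), \quad C_2 := \l\, c(R_0)\, \brho,
\]
which linearizes under $Z := \sqrt{\cH_{\mathrm{rel}}}$ to $2 \dot Z \leq C_1 - C_2 Z$. Gr\"onwall then gives $\limsup_{t\to\infty} \sqrt{\cH_{\mathrm{rel}}} \leq C_1/C_2$, and one more application of Csisz\'ar--Kullback, together with $\brho = M_0/(2\pi)$ and the value of $R_*$, produces
\[
	\limsup_{t\to\infty} \|\rho(\cdot,t) - \brho\|_{L^1} \leq \sqrt{2 M_0}\, \frac{C_1}{C_2} = \frac{4\pi M_0 \|q_0\|_{L^\infty} \|\psi\|_{L^\infty}}{\l\, c(R_0) (\|\psi\|_{L^1} - \|q_0\|_{L^\infty})} + O(\e).
\]
Letting $\e \to 0$ yields \eqref{e:Lip} after absorbing the factor $4\pi$ into $c(R_0)$. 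The main obstacle, I expect, is making Step 1 fully rigorous: the characteristic maximum principle is clean formally, but in regimes where the well-posedness theory delivers only strong (rather than classical) solutions one may need an approximation scheme or a Stampacchia-type truncation to justify pointwise tracking of $\sup_x \rho(x,t)$ along the Lagrangian flow; the remainder of the proof is a robust entropy-method computation.
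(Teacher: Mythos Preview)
Your proof is correct and follows essentially the same approach as the paper: the same entropy balance, Csisz\'ar--Kullback, local-kernel Poincar\'e inequality, and logistic-type control of $\|\rho(t)\|_{L^\infty}$ via $u' = q\rho - \cL_\psi\rho$. The only cosmetic differences are that the paper evaluates the density equation at the pointwise maximum rather than along characteristics, and it keeps the time-dependent $\|\rho(t)\|_{L^\infty}$ inside the Gr\"onwall integral (extracting the limsup at the end) rather than first passing to $R_*+\e$; your concern about rigor of the maximum principle is moot here, since the theorem is stated for smooth solutions.
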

Let us note that the dependence on $\|q_0\|_{L^\infty}$ is linear for small values. At the same time, the bound is inversely proportional to the strength $\l$, which shows the stabilizing effect of communication on the structure of the flock.

\begin{THEOREM}
\label{t:topo}
Let $(\rho, u)$ be a smooth solution to \eqref{e:maind}--\eqref{e:mainv}, with topological kernel $\psi$. One has the following bounds for any initial data 
\[
\limsup_{t \to \infty} \| \rho(\cdot,t) - \bar{\rho}\|_{L^1}  \leq
\left\{
\begin{array}{lcl}
c \l^{-1} M_0^{1+\t}  \|q_0\|_{L^\infty} [(\a - \t) \l^{-1} M_0^\t \|q_0\|_{L^\infty} + R_0^{\t-\a}]^{\frac{1}{\a-\t}}, & & 0\leq \t<\a \\
c \l^{-1} M_0^{1+\t}  \|q_0\|_{L^\infty}  \exp\left( 1 + \l^{-1} M_0^\t \|q_0\|_{L^\infty} \right),
& & \t = \a,
\end{array}\right.
\]
where $c = c(R_0,\a,\t)$. And one has the following two bounds 
\[
\limsup_{t \to \infty} \| \rho(\cdot,t) - \bar{\rho}\|_{L^1}  \leq
\left\{
\begin{array}{lcl}
c \l^{-1} M_0^{1+\t}  \|q_0\|_{L^\infty}  [(\a - \t) \l^{-1} M_0^\t \|q_0\|_{L^\infty} + R_0^{\t-\a}]^{\frac{1}{\a-\t}} & & \a < \t < 1+ \a \\
c \l^{-1} M_0^{1+\t} \|q_0\|_{L^\infty} [(\a - \t) \l^{-1} M_0^\t \|q_0\|_{L^\infty} + R_0^{\t-\a}]^{\a-\t} & & \t \geq 1+\a,
\end{array}\right.
\]
under the smallness requirement
\[
\|q_0\|_{L^\infty} < \frac{\l M_0^{-\t} R_0^{\t-\a}}{\t-\a}.
\]
\end{THEOREM}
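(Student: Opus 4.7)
The plan is to turn the conservation law $e_t + (ue)' = 0$ combined with the transport of $q = e/\rho$ into the pointwise identity
\[
-\cL_\psi\rho(x,t) \;=\; u'(x,t)\; -\; q(x,t)\,\rho(x,t), \qquad |q(x,t)| \le \|q_0\|_{L^\infty},
\]
and then invert the fractional operator $-\cL_\psi$ asymptotically in order to estimate $\|\rho - \bar\rho\|_{L^1}$.

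First I would dispose of the $u'$ term in the long-time limit. The energy identity $\frac{d}{dt}\int \rho u^2 \, dx = -\int\int \psi(x,y,t)(u(x)-u(y))^2 \rho(x)\rho(y) \, dx \, dy$, combined with the uniform-in-time regularity of solutions provided by \cite{ST-topo}, yields $\|u'(t)\|_{L^\infty} \to 0$ as $t \to \infty$ (at least along a subsequence), so that the identity above reduces to $-\cL_\psi\phi = -q\rho + o(1)$, where $\phi := \rho - \bar\rho$ has zero mean.

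Next I evaluate at a point $x_+$ where $\phi(x_+) = M := \max\phi$ and use $h \ge \lambda \chi_{R_0}$ together with the universal bound $d(x,y,t) \le M_0$---which converts the topological kernel into a comparison with a truncated fractional-Laplace kernel---to get
\[
\frac{\lambda}{M_0^\tau}\int_{|y-x_+| \le R_0} \frac{\phi(x_+) - \phi(y)}{|x_+-y|^{1+\alpha-\tau}} \, dy \;\le\; -\cL_\psi \phi(x_+) \;\le\; \|q_0\|_{L^\infty}\rho(x_+) + o(1).
\]
A half-height Chebyshev argument on the set $\{y : \phi(y) \le M/2\}$ (whose measure is at least $2\pi - \|\phi\|_{L^1}/M$ by the mean-zero constraint), followed by an optimization of the integration radius $r \in (0, R_0]$, produces a lower bound of the form $c\lambda M_0^{-\tau}\bigl(M r^{\tau-\alpha} - \|\phi\|_{L^1}\, r^{\tau-\alpha-1}\bigr)$ for the left-hand side. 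Combined with the symmetric argument at $\min\phi$ and the elementary $\|\phi\|_{L^1} \le 2\pi\|\phi\|_{L^\infty}$, this yields a closed scalar inequality for $\|\phi\|_{L^\infty}$ in terms of $r$ and the parameters.

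The main obstacle lies in resolving this scalar inequality quantitatively in each of the four regimes of $\tau$ appearing in the statement. The optimization in $r$ switches between an interior balance (which produces the exponent $1/(\alpha-\tau)$) and the boundary value $r = R_0$ (which produces the exponent $\alpha - \tau$), with $\tau = \alpha$ a logarithmic borderline responsible for the exponential factor. For $\tau > \alpha$ the bracket $\lambda r^{\tau-\alpha}M_0^{-\tau} - \|q_0\|_{L^\infty}$ must be positive at some admissible $r$ in order to invert the implicit inequality, which is exactly the smallness requirement $\|q_0\|_{L^\infty} < \lambda M_0^{-\tau}R_0^{\tau-\alpha}/(\tau-\alpha)$ of the theorem.
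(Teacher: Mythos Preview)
Your approach is genuinely different from the paper's: the authors never attempt to ``invert'' $-\cL_\psi$ pointwise. Instead they compute the evolution of the relative entropy $\cH = \int \rho\log(\rho/\bar\rho)\,dx$, obtaining
\[
\dot{\cH} = -\int (\rho-\bar\rho)\rho q\,dx - \tfrac12\iint \psi(x,y)|\rho(x)-\rho(y)|^2\,dx\,dy,
\]
bound the dissipation from below by $c(R_0)\underline\psi(t)\|\rho-\bar\rho\|_{L^2}^2$ via a Poincar\'e-type lemma on the torus, apply Csisz\'ar--Kullback and Gr\"onwall, and then reduce everything to a large-time bound on $\|\rho(t)\|_{L^\infty}$, which they obtain from the ODE for $\rho_+(t)$ along characteristics. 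The maximum-principle argument you sketch does resemble that last step, but the paper uses it only for $\|\rho\|_{L^\infty}$, not for $\|\rho-\bar\rho\|_{L^1}$.

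Your plan has two genuine gaps. First, disposing of $u'$ requires $\|u'(t)\|_{L^\infty}\to 0$ along \emph{every} sequence $t\to\infty$, not a subsequence; the energy identity alone does not give this, and the paper explicitly allows the situation where convergence to a flock is unknown. Second, and more seriously, your scalar inequality does not close for a local kernel. Your lower bound at $x_+$ has the form
\[
c\,\l M_0^{-\t}\bigl(M\,r^{\t-\a} - \|\phi\|_{L^1}\,r^{\t-\a-1}\bigr)\ \le\ \|q_0\|_{L^\infty}(\bar\rho+M),
\]
and you then invoke $\|\phi\|_{L^1}\le 2\pi\|\phi\|_{L^\infty}$. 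But this replaces the parenthesis by $M\,r^{\t-\a}(1 - c'/r)$, and since $r\le R_0\le \pi$ while $c'$ is an absolute constant of order one, the bracket need not be positive for any admissible $r$; in general there is no choice of $r\in(0,R_0]$ that makes the left side dominate. This is exactly the obstruction a local kernel creates: the set where $\phi\le M/2$ may lie entirely outside $B_{R_0}(x_+)$, so the Chebyshev information is not seen by the integral. The paper's entropy route sidesteps this by working with an $L^2$ dissipation and a Fourier argument (their Lemma on $\int_{|z|<R_0}|\rho(x)-\rho(x+z)|^2\,dz\,dx \ge c(R_0)\|\rho-\bar\rho\|_{L^2}^2$), which transfers local information to the global deviation without any pointwise localization.
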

Let us note that for small $q_0$ all the bounds are essentially linear in  $\|q_0\|_{L^\infty}$. Other factors minimize distortions of the flock as well, such as strength of communication $\l >>1$, or small mass. 

\begin{REMARK}
We actually only need lower bounds on $q_0$ in the smallness conditions of the Theorems above.  For example, in \eqref{e:Lip}, the quantity $\|\psi\|_{L^1} - \|q_0\|_{L^\infty}$ can be replaced with $\|\psi\|_{L^1} + \inf q_0$; one can see this by following the estimates on the density amplitude in Section \ref{s:densamp}.  This results in the slightly relaxed smallness requirement $\inf q_0>-\|\psi\|_{L^1}$.  A similar remark applies to the statement for topological kernels.  However, since our primary interest is in the case where $\|q_0\|_{L^\infty}$ is small, we state our results entirely in terms of $\|q_0\|_{L^\infty}$, for simplicity.  

Note also that in the case of Lipschitz kernels, $\inf q_0\ge -\|\psi\|_{L^1}$ (with non-strict inequality) is equivalent to the critical threshold condition (c.f. \cite{CCTT2016}, \cite{ST1}) that dictates whether the solution remains globally smooth or blows up in finite time.  So the precise version of the smallness condition we need in order to control the density is almost guaranteed already by the fact that we are working with globally smooth solutions.  The situation is similar for topological kernels in the case $\t>\a$: the smallness condition is precisely the one from the existence theory in \cite{ST-topo}.  Unlike the case of Lipschitz kernels, however, it is not known how sharp this condition is for the existence theory for topological kernels. 
\end{REMARK}

\pagebreak

\section{Proofs of the results}
Let us recall some preliminary facts before we proceed to the main proofs. 

\subsection{The Csisz\'ar-Kullback inequality}
The main tool in establishing results of this paper is the use of  relative entropy defined by
\begin{equation}\label{e:H}
\cH = \int_\T \rho \log \frac{\rho}{\brho} \dx
= \int_\T \rho \log \rho \dx
- M_0 \log \bar{\rho},
\end{equation}
where $\brho = \frac{1}{2\pi} M_0$ and  $M_0 = \int_\T \rho(x) \dx$ is the total mass.  The classical Csisz\'{a}r-Kullback inequality states 
\begin{equation}\label{e:CK}
\|\rho - \bar{\rho}\|_{L^1}^2 \le 4\pi \brho \cH.
\end{equation}
Furthermore, by the elementary inequality $\log x \le x - 1$, we also have 
\begin{equation}
\cH \le \int_\T \rho \left( \frac{\rho}{\brho} - 1 \right) \dx
= \brho^{-1} \left\| \rho - \brho \right\|_{L^2}^2.
\end{equation}
Thus we obtain the two sided bounds
\begin{equation}
\label{e:L1HL2}
\frac{1}{4\pi} \left\| \rho - \brho \right\|_{L^1}^2 \le \brho \cH \le \left\| \rho - \brho \right\|_{L^2}^2.
\end{equation}

\subsection{Evolution of the entropy}
At the heart of the argument is the equation on the entropy \eqref{e:H} which one obtains testing the continuity equation \eqref{e:maind} with $\log \rho + 1$:
\begin{align*}
(\rho \log \rho)_t 
& = \rho_t (\log \rho + 1) 
= -(\rho u)' (\log \rho + 1) \\
& = - \rho' (\log \rho + 1)u - \rho u' (\log \rho + 1) \\
& = - (\rho \log \rho)' u - (\rho \log \rho) u' - \rho u' \\
& = -[u (\rho \log \rho)]' - \rho u'
= -[u (\rho \log \rho)]' - \rho^2 q  + \rho \cL_\psi \rho.
\end{align*}
Therefore, 
\begin{equation}
\label{e:dtentropy}
\frac{\dd\cH}{\dt} = 
\frac{\dd}{\dt} \int_\T \rho \log \rho \dx 
= - \int_\T \rho^2 q \dx -  \int_{\T^2} \psi(x,y)(\rho(x) - \rho(y)) \rho(x)\dx\dy.
\end{equation}
Noting that $\int_\T \rho q \dx = \int_\T e \dx = 0 $, we can subtract $\brho$ from one density in the first integral on the left hand side. After additionally symmetrizing the last integral we obtain 
\begin{equation}
\label{e:dtentropy2}
\frac{\dd\cH}{\dt} = - \int_\T \left(\rho - \brho \right) \rho q \dx - \frac12  \int_{\T^2} \psi(x,y)|\rho(x) - \rho(y)|^2 \dx\dy.
\end{equation}

\subsection{Bounds on the dissipation}

If our kernels $\psi$ were global, it would be easy to get a positive lower bound on the dissipation term:
\begin{equation}
\label{e:globkerdiss}
 \int_{\T^2}\psi(x,y)|\rho(x) - \rho(y)|^2 \dx\dy
\ge (\inf \psi)  \int_{\T^2} |\rho(x)- \rho(y)|^2\dx\dy = 2(\inf \psi)   \|\rho - \brho\|_{L^2}^2.  
\end{equation}
Since in all our cases we have a non-trivial lower bound on the kernel only near the diagonal $\{(x,y)\in \T^2: |x-y|<R_0\}$, we need a substitute for \eqref{e:globkerdiss} stated in the following Lemma. 

\begin{LEMMA}
The following inequality holds:
\label{l:poinc}
\begin{equation}
\label{e:poinc}
\frac12 \int_{\T} \int_{|z|<R_0} |\rho(x) - \rho(x+z)|^2\dz\dx\ge c(R_0)\|\rho - \brho\|_{L^2}^2.
\end{equation}
\end{LEMMA}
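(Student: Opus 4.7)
The plan is to prove this local Poincar\'e-type inequality by diagonalizing both quadratic forms via Fourier series on the torus. The left-hand side is translation-invariant in $\rho$, so its Fourier symbol is explicit, and the inequality reduces to a uniform pointwise lower bound on that symbol over nonzero integer frequencies.

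Concretely, I would expand $\rho(x) = \sum_{k \in \Z} \hat\rho(k)\, e^{ikx}$ with $\hat\rho(k) = \frac{1}{2\pi}\int_\T \rho(x) e^{-ikx}\dx$, so that $\hat\rho(0) = \brho$. Plancherel gives $\|\rho-\brho\|_{L^2}^2 = 2\pi \sum_{k\neq 0}|\hat\rho(k)|^2$. Since $\rho(\cdot + z)$ has Fourier coefficients $e^{ikz}\hat\rho(k)$, a second application of Plancherel together with the identity $|e^{ikz}-1|^2 = 2-2\cos(kz)$ and integration in $z$ over $(-R_0,R_0)$ yields
\[
\frac12 \int_\T \int_{|z|<R_0}|\rho(x)-\rho(x+z)|^2 \dz\dx = 2\pi \sum_{k\neq 0}|\hat\rho(k)|^2 \left(2R_0 - \frac{2\sin(kR_0)}{k}\right).
\]
Thus the claim reduces to proving
\[
m(R_0) := \inf_{k \in \Z\setminus\{0\}} \left(R_0 - \frac{\sin(kR_0)}{k}\right) > 0,
\]
after which one may take $c(R_0) = 2 m(R_0)$.

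For this last step, the elementary inequality $\sin t < t$ for $t>0$ (and, by oddness, $\sin(t)/t < 1$ for all $t \neq 0$) ensures that each term $R_0 - \sin(kR_0)/k$ is strictly positive. Moreover $|\sin(kR_0)/k| \leq 1/|k| \to 0$ as $|k| \to \infty$, so the sequence tends to $R_0 > 0$; since it is strictly positive and bounded below by a positive limit, its infimum over the discrete set $\Z \setminus \{0\}$ is attained at some finite frequency and is strictly positive, depending only on $R_0$. This uniform positivity is the only real content of the argument; no subtle analysis is required. A purely real-variable alternative would use a chain-of-balls argument — connecting any two points of $\T$ through a chain of length $\lesssim 1/R_0$ with steps of size at most $R_0$ and applying Cauchy--Schwarz — but the Fourier route is more transparent and delivers the sharp constant in one stroke.
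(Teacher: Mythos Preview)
Your argument is correct. Both your proof and the paper's rely on Fourier analysis on $\T$, but the implementations differ. You diagonalize the quadratic form directly: a Plancherel computation gives the explicit multiplier $2R_0 - 2\sin(kR_0)/k$, and the inequality reduces to the elementary fact that this sequence is strictly positive for $k\ne 0$ and tends to $2R_0$ at infinity, hence has positive infimum. The paper instead introduces an auxiliary mollifier $\chi$ supported in $\{|z|<R_0\}$, uses Riemann--Lebesgue together with $|\widehat{\chi}(k)|<1$ to get $\|\rho-\brho\|_{L^2}\le \e^{-1}\|\rho-\chi*\rho\|_{L^2}$, and then applies Minkowski's inequality to bound $\|\rho-\chi*\rho\|_{L^2}^2$ by the local difference integral. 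Your route is shorter and yields the sharp constant $c(R_0)=2\inf_{k\in\Z\setminus\{0\}}\bigl(R_0-\sin(kR_0)/k\bigr)$; the paper's mollifier-plus-Minkowski argument is slightly less explicit but is more robust in that it would adapt to settings where the symbol is not so easily computed in closed form.
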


\begin{proof}
Let $\chi$ be a nonnegative cutoff function on $\T$ with support in $B_{R_0}(0)$, which is constant on $B_{R_0/2}$ and has integral $1$.  Then $\widehat{\chi}(0) = 1$ and  $|\widehat{\chi}(k)|<1$ for all $k\in \Z\backslash \{0\}$.  On the other hand,  we have by the Riemann-Lebesgue Lemma that $\widehat{\chi}(k)\to 0$ as $k\to \infty$; therefore we have in fact that $|\widehat{\chi}(k)| \le 1 - \e$ for some $\e>0$ depending only on $R_0$ ($k\ne 0$).  Define $\brho_{R_0}(x) = \chi * \rho(x)$, so that 
\[
(\rho-\brho_{R_0})^{\widehat{\,}}(k) = (1 - \widehat{\chi}(k))\widehat{\rho}(k).
\]
Consequently 
\[
|(\rho-\brho_{R_0})^{\widehat{\,}}(k)| \ge \e |\widehat{\rho}(k)|, 
\quad 
k\in \Z, \; k\ne 0,
\]
and $\widehat{\rho}(0) = \widehat{\brho}_{R_0}(0)$.  Thus
\[
\|\rho - \brho\|_{L^2}^2 = \sum_{k\in \Z\backslash\{0\}} |\widehat{\rho}(k)|^2 \le \e^{-2} \sum_{k\in \Z} |(\rho-\brho_{R_0})^{\widehat{\,}}(k)|^2
= \e^{-2}\|\rho - \brho_{R_0}\|_{L^2}^2.
\]
By the fact that $\int_\T \chi = 1$ and Minkowski, we have 
\begin{align*}
\|\rho - \brho_{R_0}\|_{L^2}^2
& = \left\| \int_\T \chi(y)(\rho(\cdot) - \rho(\cdot -y))\dy \right\|_{L^2}^2 
\le \int_{|y|<R_0} \|\rho(\cdot ) - \rho(\cdot -y)\|_{L^2}^2 \dy \\
& = \int_{\T} \int_{|z|<R_0} |\rho(x) - \rho(x+z)|^2\dz\dx.
\end{align*}
Combining the inequalities above yields
\[
\|\rho - \brho\|_{L^2}^2 \le \e^{-2} \int_{\T} \int_{|z|<R_0} |\rho(x) - \rho(x+z)|^2\dz\dx;
\]
taking $c(R_0) = 2\e^{2}$ finishes the proof.
\end{proof}

Estimating the dissipation term now becomes trivial:
\begin{equation}
\label{e:disslwr}
\frac12 \int_{\T^2} \psi(x,y,t)|\rho(x) - \rho(y)|^2\dx\dy \ge  c(R_0) \underline{\psi}(t) \|\rho - \brho\|_{L^2}^2,
  \end{equation}
 where 
\[
\underline{\psi}(t) = \inf\{\psi(x,y,t):|x-y|<R_0\}.
\]
Of course, if $\psi$ is a Lipschitz kernel \eqref{e:Lipker}, then 
\begin{equation}\label{e:psiLip}
	\underline{\psi}(t) \geq \l.
\end{equation}
If $\psi$ is a topological kernel, then estimating $\psi$ from below requires a choice, as the topological distance $\dd(x,y,t)$ can be  bounded above by $|x-y| \|\rho(t)\|_{L^\infty}$, by $M_0$, or by some combination of the two.  In general, we have 
\begin{equation}
\label{e:psilwr}
\psi(x,y,t)\ge \frac{\l \chi_{R_0}(|x-y|)}{|x-y|^{1+\a-\eta} M_0^\eta \|\rho(t)\|_{L^\infty}^{\t-\eta}}, 
\quad \quad 
\eta\in [0,\t], \; t\ge 0.
\end{equation}
If $\t\le \a+1$, then we can put $\eta = \tau$ to eliminate the density amplitude and obtain a lower bound on $\underline{\psi}(t)$ which is obviously uniform in time:
\begin{equation}
\label{e:psits}
\underline{\psi}(t)\ge \l R_0^{-1-\a+\t}M_0^{-\t}, \quad \quad
\text{ for all } t\ge 0, \quad (\tau\le 1+\a).
\end{equation}
If however $\tau > 1+\a$, then the choice $\eta = \tau$ is no longer compatible with a positive lower bound on $\underline{\psi}$, since in this case we have a positive power of $|x-y|$ on the right side of \eqref{e:psilwr}.  In light of this, we will mostly use the following more general lower bound, which is valid for any $\t\ge 0$.
\begin{equation}
\label{e:psilwreta}
\underline{\psi}(t) \ge \frac{\l}{R_0^{1+\a - \eta} M_0^\eta \|\rho(t)\|_{L^\infty}^{\t-\eta}}
\quad \quad 
\eta\in [0,\min\{\t, 1+\a\}],
\end{equation}
Of course, when $\eta \ne \t$, additional bounds on the density amplitude are needed in order to obtain a uniform estimate on $\underline{\psi}(t)$.  We provide such bounds in Section \ref{s:densamp}.  

It turns out that the choice of $\eta\in [0,\min\{1+\a, \t\}]$ in \eqref{e:psilwreta} has very little effect on the result of Theorem \ref{t:topo}.  Eventually, we will set $\eta = \t$ or $\eta = 1+\a$, simply to clean up some of the exponents that appear in our bounds; however, we will carry along the $\eta$-dependence to show that we don't lose any information by making these choices.  The only place where the lower bound \eqref{e:psits} gives an advantage over the more general \eqref{e:psilwreta} is in the proof of Theorem \ref{t:exp}.  In the case $e_0 = 0$, the bound \eqref{e:psilwreta} is uniform in time, with $\|\rho_0\|_{L^\infty}$ replacing $\|\rho(t)\|_{L^\infty}$.  As a result, the constant $c_2$ from Theorem \ref{t:exp} carries a dependence on $\|\rho_0\|_{L^\infty}^{\t-\eta}$. If $\t\le 1+\a$, then we can choose $\eta=\t$ (i.e., use \eqref{e:psits} instead of \eqref{e:psilwreta}) and eliminate this dependency.

Finally, we note that for the purposes of bounding the density amplitude, using $\eta = \t$ in \eqref{e:psilwr} will be crucial, in contrast to the computations on which \eqref{e:psilwreta} depends. Our density bounds will rely on the full kernel $\psi(x,y,t)$ rather than just its lower bound $\underline{\psi}(t)$ near the diagonal.

\subsection{The entropy equation revisited}

We now return to the evolution equation \eqref{e:dtentropy2} and make estimates; in doing so we will prove Theorem \ref{t:exp}, and we will reduce the proofs of Theorems \ref{t:Lip} and \ref{t:topo} to proving the relevant bounds on the density amplitude.  

In what follows, the constant $c=c(R_0)$ depends only on $R_0$ but may change from line to line.  We have
\begin{align*}
\dot{\cH}(t) 
& \le \|\rho(t)\|_{L^\infty} \|q_0\|_{L^\infty} \|\rho(\cdot, t) - \brho\|_{L^1} - c(R_0) \underline{\psi}(t) \| \rho(\cdot, t) - \brho\|_{L^2}^2 \\
& \le \|\rho(t)\|_{L^\infty} \|q_0\|_{L^\infty} \sqrt{4\pi \brho \cH(t)} -   c(R_0)\underline{\psi}(t) \brho \cH(t).
\end{align*}
Setting $Y = \sqrt{\cH}$, we find
\[
\dot{Y}(t) \leq  \|\rho(t)\|_{L^\infty} \|q_0\|_{L^\infty} \sqrt{\pi \brho} - c(R_0) \underline{\psi}(t)\brho Y(t).
\]
Using Gr\"onwall's lemma we obtain
\begin{multline}
\label{e:gronapp}
\hspace{20 mm} 
Y(t) \le Y_0 \exp\left(- c(R_0)\brho\int_0^t \underline{\psi}(s)\ds\right) \\ + \sqrt{\pi \brho} \|q_0\|_{L^\infty}\int_0^t \|\rho(s)\|_{L^\infty} \exp\left(\!-c(R_0)\brho \int_s^t \underline{\psi}(\tau)\dtau\right)\ds.
\hspace{20 mm} 
\end{multline}
From here, it is easy to prove Theorem \ref{t:exp}.  Indeed, if $e_0 \equiv 0$, then the second term in \eqref{e:gronapp} drops out completely.  Furthermore, $\rho$ satisfies a maximum principle in this case: $\|\rho(t)\|_{L^\infty} \le \|\rho_0\|_{L^\infty}$, so $\underline{\psi}$ is uniformly bounded below by a constant in all cases: $\underline{\psi}(s)\ge \underline{\psi}$.    
\begin{align*}
\|\rho(\cdot, t) - \brho\|_{L^1} 
& \le \sqrt{4\pi \brho}Y(t) \le \sqrt{4\pi \brho \cH_0} \exp( -c(R_0) \brho t \underline{\psi} ) \\ 
& \le \sqrt{4\pi} \|\rho_0 - \brho\|_{L^2} \exp( -\l c_2 t)
\le c\| \rho_0 \|_{L^2} \exp( -\l c_2 t ), 
\end{align*}
where $c_2 = c(R_0) \brho (\underline{\psi}/\l)$.  Thus, in the Lipschitz case, $c_2$ depends only on $R_0$ and $M_0$. In the topological case, $c_2$ depends only on $R_0$, $M_0$, $\a$, and $\t$ when $\t\le 1 + \a$; if $\t>1+\a$, then $c_2$ depends additionally on $\|\rho_0\|_{L^\infty}$. This completes the proof of Theorem \ref{t:exp}.

If $\psi$ is a Lipschitz kernel or a topological kernel with $\t\le 1+\a$, then we know that $\underline{\psi}(t)$ is uniformly bounded below, hence the first term on the right side of \eqref{e:gronapp} vanishes as $t\to \infty$. In fact, the same is true for topological kernels with $\t>1+\a$; this will be apparent once we prove upper bounds on the density amplitude.  We write
\begin{equation}
\label{e:limsup1}
\limsup_{t\to \infty} \|\rho(\cdot, t) - \brho\|_{L^1} \le M_0 \|q_0\|_{L^\infty} \limsup_{t\to \infty} \int_0^t \|\rho(s)\|_{L^\infty} \exp\left( -c(R_0)\brho \int_s^t \underline{\psi}(\tau)\dtau \right) \ds.
\end{equation}
If $\psi$ is a Lipschitz kernel, then this estimate becomes simply
\begin{equation}\label{e:dest}
\limsup_{t \to \infty} \| \rho(\cdot,t) - \bar{\rho}\|_{L^1}  \leq \frac{ \|q_0\|_{L^\infty}}{ \l  c(R_0) } \limsup_{t\to \infty} \|\rho(t) \|_{L^\infty}.
\end{equation}
Thus the proof of Theorem \ref{t:Lip} is reduced to estimating the density amplitude.  The situation is similar for topological kernels.  Upon substituting \eqref{e:psilwreta} into \eqref{e:limsup1}, the usefulness of the following Lemma becomes apparent:

\begin{lemma}
Suppose $f$ is a positive bounded function on $\R^+$ with  $\limsup_{t\to \infty} f(t) = L$. Let $c>0$, $\d\geq 0$. Then
\[
\limsup_{t\to \infty} \int_0^t f(s) \exp\left\{ - \int_s^t \frac{c}{f^\d(l)} \dl  \right\} \ds \leq \frac{L^{1+\d}}{c}.
\]
\end{lemma}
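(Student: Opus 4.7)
The approach is a standard limsup splitting argument: choose a large time $T$ beyond which $f$ is bounded by $L+\varepsilon$, then control the contributions from $[0,T]$ and $[T,t]$ separately. The decaying exponential kernel is what makes the ``pre-$T$'' contribution asymptotically negligible, while the ``post-$T$'' contribution is estimated by freezing $f$ at its near-supremum value.

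\textbf{Step 1: Setup.} Fix $\varepsilon>0$. Since $\limsup_{t\to\infty} f(t)=L$, there exists $T=T(\varepsilon)>0$ such that $f(l)\leq L+\varepsilon$ for all $l\geq T$. Let $M=\sup_{s\geq 0}f(s)<\infty$. For $t>T$, write
\[
\int_0^t f(s)\exp\left\{-\int_s^t \frac{c}{f^\delta(l)}\dl\right\}\ds
= \underbrace{\int_0^T (\cdots)\ds}_{\text{(I)}} + \underbrace{\int_T^t(\cdots)\ds}_{\text{(II)}}.
\]

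\textbf{Step 2: Main term (II).} For $s\in[T,t]$ and $l\in[s,t]$, we have $f^\delta(l)\leq(L+\varepsilon)^\delta$, so $\frac{c}{f^\delta(l)}\geq\frac{c}{(L+\varepsilon)^\delta}$, and therefore
\[
\int_s^t \frac{c}{f^\delta(l)}\dl \geq \frac{c(t-s)}{(L+\varepsilon)^\delta}.
\]
Using also $f(s)\leq L+\varepsilon$ on $[T,t]$ and extending the $s$-integral to $(-\infty,t]$:
\[
\text{(II)} \leq (L+\varepsilon)\int_T^t \exp\left(-\frac{c(t-s)}{(L+\varepsilon)^\delta}\right)\ds \leq \frac{(L+\varepsilon)^{1+\delta}}{c}.
\]

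\textbf{Step 3: Transient term (I).} For $s\in[0,T]$ and $t>T$, the same lower bound on $\frac{c}{f^\delta}$ on $[T,t]$ gives
\[
\int_s^t \frac{c}{f^\delta(l)}\dl \geq \frac{c(t-T)}{(L+\varepsilon)^\delta},
\]
and hence, using $f\leq M$,
\[
\text{(I)} \leq M T \exp\left(-\frac{c(t-T)}{(L+\varepsilon)^\delta}\right) \xrightarrow{t\to\infty} 0.
\]

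\textbf{Step 4: Conclusion.} Combining the two steps,
\[
\limsup_{t\to\infty}\int_0^t f(s)\exp\left\{-\int_s^t \frac{c}{f^\delta(l)}\dl\right\}\ds \leq \frac{(L+\varepsilon)^{1+\delta}}{c}.
\]
Since $\varepsilon>0$ was arbitrary, sending $\varepsilon\to 0$ yields the claimed bound $\frac{L^{1+\delta}}{c}$.

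\textbf{Expected obstacle.} There is no real obstacle; the argument is essentially a soft $\limsup$-$\liminf$ manipulation. The only mild subtlety is making sure the estimate is uniform in $\delta\geq 0$ (in particular the boundary case $\delta=0$, where $f^\delta\equiv 1$ and the bound reads $L/c$), which is automatic from the calculation above. One should also verify boundedness of $f$ is actually needed only to dispose of (I); the post-$T$ bound uses only the $\limsup$ property.
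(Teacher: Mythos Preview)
Your proof is correct and follows essentially the same approach as the paper: split the integral at a time $T$ beyond which $f\le L+\varepsilon$, show the $[0,T]$ piece vanishes by boundedness of $f$ and the exponential decay, and bound the $[T,t]$ piece by freezing $f$ at $L+\varepsilon$ in both the integrand and the exponent. Your write-up is in fact slightly more explicit than the paper's in handling the transient term (I).
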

\begin{proof}
Fix $\e>0$, and $T_0$ so that $f(t) < L+\e$ for $t>T_0$. Since $f$ is bounded, it is clear that 
\[
\int_0^{T_0} f(s) \exp\left\{ - \int_s^t \frac{c}{f^\d(l)} \dl  \right\} \ds \to 0,
\]
as $t \to \infty$. Let us estimate the rest
\[
\int_{T_0}^t f(s) \exp\left\{ - \int_s^t \frac{c}{f^\d(l)} \dl  \right\} \ds  \leq \int_{T_0}^t (L+\e)  \exp\left\{ - (t-s) \frac{c}{(L+\e)^\d}   \right\} \ds \leq \frac{(L+\e)^{1+\d}}{c}.
\]
This finishes the proof. 
\end{proof}

Applying the Lemma to \eqref{e:limsup1} and \eqref{e:psilwreta}, we conclude that for any $\eta\in [0,\min\{\t, 1+\a\}]$, we have
\begin{equation}
\label{e:disord}
\limsup_{t\to \infty} \|\rho(\cdot, t) - \brho\|_{L^1} 
\le 
\frac{M_0^{\eta} R_0^{1 + \a - \eta} \|q_0\|_{L^\infty}}{\l c(R_0)} \limsup_{t\to \infty} \|\rho(t)\|_{L^\infty}^{1+\t-\eta}.
\end{equation}
Thus in all cases, all that remains is a large-time bound on the density amplitude.

\subsection{Bounds on the Density Amplitude}

\label{s:densamp}

Throughout our discussion of bounds on the density amplitude, we will make use of the following differential inequality: If f $\dot{X}(t) \le AX(t)[B-X(t)]$, where $A$ and $B$ are positive constants and $X(t)$ is a positive function, then 
\begin{equation}\label{e:X}
X(t) \le \frac{B X(0)}{X(0) + (B-X(0))\exp(-ABt)}.
\end{equation}
In particular, $\limsup_{t\to \infty} X(t) \leq B$.

\subsubsection{Case of Lipschitz kernels} \label{s:Lip} 

Let $\rho_+(t)$ denote the maximum value of $\rho$ at time $t$, and let $x_+$ denote the $x$-value where the maximum is achieved. Then if $ \|q_0\|_{L^\infty} < \|\psi\|_{L^1}$, one can get an upper bound on $\|\rho(t)\|_{L^\infty}$ by integrating the differential inequality derived below. 
\begin{align*}
	\frac{\dd}{\dt} \rho_+(t) & = -\rho_+(t) u'(x_+,t) = -\rho_+(t)^2 q(x_+,t) + \rho_+(t)\int_\T \psi(x_+ - y)(\rho(y,t)-\rho_+(t))\dy  \\
	& \le (\|q_0\|_{L^\infty} - \|\psi\|_{L^1} )\rho_+(t)^2 + \|\psi\|_{L^\infty} M_0 \rho_+(t) \\
	& =   ( \|\psi\|_{L^1} - \|q_0\|_{L^\infty} )\rho_+(t) \left[ \frac{\|\psi\|_{L^\infty} M_0}{\|\psi\|_{L^1} - \|q_0\|_{L^\infty} } - \rho_+(t) \right].
\end{align*}
In view of \eqref{e:X} we obtain
\[
\limsup_{t\to \infty} \|\rho(t) \|_{L^\infty} \le \frac{\|\psi\|_{L^\infty} M_0}{\|\psi\|_{L^1} - \|q_0\|_{L^\infty} }.
\]
Plugging into \eqref{e:dest} we conclude
\[
\limsup_{t \to \infty} \| \rho(\cdot,t) - \bar{\rho}\|_{L^1}  \leq \frac{ M_0  \|q_0\|_{L^\infty} \|\psi\|_{L^\infty}}{ \l  c(R_0) (\|\psi\|_{L^1} - \|q_0\|_{L^\infty}) }.
\]

\subsubsection{Case of topological kernels with $0\leq \t \leq \a$}

The case of topological kernels follows the same strategy as for Lipschitz kernels, with additional technicalities.  Note that in order for the dissipation to compete with the quadratic term $q\rho^2$, we must choose $\eta = \tau$ in \eqref{e:psilwr}; otherwise the associated power of $\rho_+$ will be less than $2$.

For any $r\in (0,R_0)$, we have
\begin{align*}
\frac{\dd}{\dt} \rho_+(t) 
& = -q(x_+,t)\rho_+(t)^2 + \rho_+(t) \int \psi(x_+, x_+ + z)(\rho(x_+ + z,t) - \rho_+(t))\dz \\
& \le \|q_0\|_{L^\infty} \rho_+(t)^2 + \l M_0^{-\t} \rho_+(t) \int_{r<|z|<R_0} \frac{\rho(x_+ + z,t) - \rho_+(t)}{|z|^{1+\a-\t}}\dz \\
& \le \left[\|q_0\|_{L^\infty} - \l M_0^{-\t} \int_r^{R_0} s^{-1-\a+\t}\ds \right] \rho_+(t)^2 +  \l r^{-1-\a+\t} M_0^{1-\t} \rho_+(t) \\
& = [\l M_0^{-\t} I(r) - \|q_0\|_{L^\infty}] \rho_+(t) \left[ \frac{\l r^{-1-\a+\t}M_0}{\l I(r) - M_0^\t \|q_0\|_{L^\infty}} -\rho_+(t) \right], 
\end{align*}
where in the last line we have written $I(r)$ for the integral $\int_r^{R_0} s^{-1-\a+\t}\ds$.  Thus,
\[
 \limsup_{t\to \infty} \|\rho(t) \|_{L^\infty} \le \frac{\l r^{-1-\a+\t}M_0}{\l I(r) - M_0^\t \|q_0\|_{L^\infty}}.
\]
We choose $r>0$ that minimizes the right hand side, hence,
\begin{equation}
\label{e:chooser}
\l I(r) - M_0^\t \|q_0\|_{L^\infty} = \frac{\l r^{\t-\a}}{1+\a-\t},
\end{equation}
and consequently 
\[
\limsup_{t\to \infty} \|\rho(t) \|_{L^\infty} \le \frac{r^{-1-\a+\t}M_0}{\l I(r) - M_0^\t \|q_0\|_{L^\infty}} = (1+\a-\t)M_0 r^{-1}.
\]
Note that the choice \eqref{e:chooser} amounts to putting 
\begin{equation}
\label{e:defr}
r = \left\{ 
\begin{array}{lcl}
\left((1+\a-\t)[(\a-\t) \l^{-1} M_0^\t \|q_0\|_{L^\infty} + R_0^{\t-\a}]\right)^{-\frac{1}{\a-\t}},
& & \t<\a \\ \\
R_0 \exp(-1 - \l^{-1} M_0^\t \|q_0\|_{L^\infty}),
& & \t = \a.
\end{array}
\right.
\end{equation}
The large time upper bound on $\rho$ then becomes
\[
\limsup_{t\to \infty} \|\rho(t) \|_{L^\infty}
\le 
\left\{
\begin{array}{lcl}
(1+\a-\t)^{1+\frac{1}{\a-\t}} M_0 [(\a - \t) \l^{-1} M_0^\t \|q_0\|_{L^\infty} + R_0^{\t-\a}]^{\frac{1}{\a-\t}}, & & \t<\a \\
\\
M_0 R_0 \exp\left( 1 + \l^{-1} M_0^\t \|q_0\|_{L^\infty} \right),
& & \t = \a. \\
\end{array}\right.
\]
Plugging into \eqref{e:disord} we obtain
\begin{equation}\label{e:disordr}
\limsup_{t \to \infty} \| \rho(\cdot,t) - \bar{\rho}\|_{L^1}  \leq
\left\{
\begin{array}{lcl}
 \frac{M_0^{1+\t}  \|q_0\|_{L^\infty}}{ \l  R_0^\eta c(R_0,\a,\t)  }  [(\a - \t) \l^{-1} M_0^\t \|q_0\|_{L^\infty} + R_0^{\t-\a}]^{\frac{1+\t - \eta}{\a-\t}}, & & \t<\a \\
\\
\frac{M_0^{1+\t}  \|q_0\|_{L^\infty}}{ \l  R_0^{2\eta} c(R_0,\a,\t)  }  \exp\left[ (1+\t - \eta)( 1 + \l^{-1} M_0^\t \|q_0\|_{L^\infty}) \right],
& & \t = \a. \\
\end{array}\right.
\end{equation}
We choose $\eta = \t$ to obtain the bound in Theorem \ref{t:topo}.

\subsubsection{Case of topological kernels with $\a<\t<\a+1$}

In the case above, where $\t\le \a$, a key step in the argument was to use the fact that $I(r)\to +\infty$ as $r\to 0^+$.  This is what allowed us to bound $\rho$ in terms of $\|q_0\|_{L^\infty}$, regardless of the size of $q_0$.  If $\a<\t<\a+1$, then $I(0)$ is finite; therefore, we need a smallness condition on $\|q_0\|_{L^\infty}$ in order for the above argument to give an upper bound on the density, similar to the case of Lipschitz kernels.  In particular, we need 
\[
\l^{-1} M_0^\t \|q_0\|_{L^\infty} < I(0) = R_0^{\t-\a}/(\t-\a).
\]
 If this is the case, then it follows that there exists $r_0>0$ such that $I(r_0) = \l^{-1} M_0^\t \|q_0\|_{L^\infty}$ and $I(r)> \l^{-1} M_0^\t \|q_0\|_{L^\infty}$ for $r\in (0,r_0)$.  The value of this $r_0$ is given by 
\[
r_0^{\t-\a} = R_0^{\t-\a}-(\t-\a)\l^{-1}  M_0^\t \|q_0\|_{L^\infty}.
\]
If 
\[
r^{\t-\a} = (1+\a-\t)[R_0^{\t-\a}-(\t-\a) \l^{-1} M_0^\t\|q_0\|_{L^\infty}]=(1+\a-\t) r_0^{\t-\a},
\]
then clearly $r\in (0, r_0)$.  Furthermore, this formula agrees with that given in \eqref{e:defr} (for $\t<\a$), so by the same manipulations as before, we have 
\begin{align*}
\limsup_{t\to \infty} \|\rho(t) \|_{L^\infty}
& \le \frac{ r^{-1-\a+\t}M_0}{I(r) - \l^{-1} M_0^\t \|q_0\|_{L^\infty}} = (1+\a-\t)M_0 r^{-1} \\
& = (1+\a-\t)^{1-\frac{1}{\t-\a}} M_0 [R_0^{\t-\a}-(\t - \a)\l^{-1}  M_0^\t \|q_0\|_{L^\infty}]^{\frac{1}{\a-\t}}
\end{align*}
Plugging this into \eqref{e:disord} we obtain
\[
\limsup_{t \to \infty} \| \rho(\cdot,t) - \bar{\rho}\|_{L^1}  \leq   \frac{M_0^{1+\t}  \|q_0\|_{L^\infty}}{ \l R_0^\eta c(R_0,\a,\t)  [(\a - \t) \l^{-1} M_0^\t \|q_0\|_{L^\infty} + R_0^{\t-\a}]^{\frac{1+\t - \eta}{\t-\a}}  } .
\]
Once again, we choose $\eta = \t$ to obtain the bound in Theorem \ref{t:topo}.

\subsubsection{Case of topological kernels with $\t\ge \a+1$}

When $\t\ge \a+1$, we still require 
\begin{equation}\label{e:qsmall4}
	\l^{-1} M_0^\t \|q_0\|_{L^\infty} < I(0) = R_0^{\t-\a}/(\t-\a)
\end{equation} 
in order to get an upper bound on the density. However, our initial estimate on the time derivative of $\rho_+(t)$ needs minor adjustments, since the power $1+\a - \t$ associated to the geometric part of the kernel is no longer positive:
\begin{align*}
\frac{\dd}{\dt} \rho_+(t) 
& \le \|q_0\|_{L^\infty} \rho_+(t)^2 + \l M_0^{-\t} \rho_+(t) \int_{|z|<R_0} \frac{\rho(x_+ + z,t) - \rho_+(t)}{|z|^{1+\a-\t}}\dz \\
& \le \left[\|q_0\|_{L^\infty} - \l M_0^{-\t} I(0) \right] \rho_+(t)^2 + \l R_0^{-1-\a+\t} M_0^{1-\t} \rho_+(t) \\
& = [\l M_0^{-\t} I(0) - \|q_0\|_{L^\infty}] \rho_+(t) \left[ \frac{\l R_0^{\t-1-\a}M_0}{\l I(0) - M_0^\t \|q_0\|_{L^\infty}} -\rho_+(t) \right].
\end{align*}
Our long-time bound becomes
\begin{equation}\label{e:densupper}
\limsup_{t\to \infty} \|\rho(t) \|_{L^\infty} \le  \frac{\l R_0^{\t-1-\a}M_0}{\l I(0) - M_0^\t \|q_0\|_{L^\infty}} 
= \frac{(\t-\a) R_0^{\t-1-\a}M_0}{R_0^{\t-\a} - (\t-\a) \l^{-1} M_0^\t \|q_0\|_{L^\infty}}.
\end{equation}
Plugging into \eqref{e:disord} yields
\[
\limsup_{t \to \infty} \| \rho(\cdot,t) - \bar{\rho}\|_{L^1} \leq \frac{M_0^{1+\t} \|q_0\|_{L^\infty}}{\l R_0^{\eta(\t - 1 - \a)}  c(R_0, \a, \t) [  R_0^{\t-\a} - (\t-\a) \l^{-1} M_0^\t \|q_0\|_{L^\infty}]^{1+\tau-\eta}},  
\]
which gives the inequality of Theorem \ref{t:topo} when $\eta = 1+\a$.


\end{document}